\numberwithin{theorem}{section}
\newcommand{\AbbrevTitle}{Computation of higher order cumulants} 
\newcommand{\TheTitle}{Efficient computation of higher order cumulant tensors} 
\newcommand{\TheAuthors}{Krzysztof Domino, Piotr Gawron, {\L}ukasz Pawela}
\headers{\AbbrevTitle}{\TheAuthors}
\title{{\TheTitle}\thanks{Submitted to the editors 07.03.2018.
		\funding{The research was partially financed by the National Science 
		Centre, Poland---project number 2014/15/B/ST6/05204. }}}
\author{
	Krzysztof Domino \thanks{Institute of Theoretical and Applied 
	Informatics, Polish Academy of Sciences, Ba{\l}tycka 5, 44-100 Gliwice, 
	Poland (\email{\{kdomino, gawron, lpawela\}@iitis.pl})}
	\and
	Piotr Gawron \footnotemark[2]
	\and
	{\L}ukasz Pawela \footnotemark[2]
}
\newcommand{\ie}{\textit{i.e.}}
\newcommand{\R}{\mathbb{R}}
\renewcommand{\AA}{\mathcal{A}}
\newcommand{\BB}{\mathcal{B}}
\newcommand{\CC}{\mathcal{C}}
\newcommand{\MM}{\mathcal{M}}
\begin{document}
\maketitle
\date{March, 7, 2018}

%%%%%%%%%%%%%%%%%%%%%%%%%%%%%%%%%%%%%%%%%%%%%%%%%%%%%%%%%%%%%%%%%%%%%%%%%%%%%%%%
\section*{Abstract}
%%%%%%%%%%%%%%%%%%%%%%%%%%%%%%%%%%%%%%%%%%%%%%%%%%%%%%%%%%%%%%%%%%%%%%%%%%%%%%%%
In this paper, we introduce a novel algorithm for calculating arbitrary order
cumulants of multidimensional data. Since the $d$\textsuperscript{th} order
cumulant can be presented in the form of an $d$-dimensional tensor, the
algorithm is presented using tensor operations. The algorithm provided in the
paper takes advantage of super-symmetry of cumulant and moment tensors.
We show that the proposed algorithm considerably reduces the computational
complexity and the computational memory requirement of cumulant calculation as 
compared with existing algorithms. For the sizes of interest, the
reduction is of the order of $d!$ compared to the na\"ive algorithm.

% REQUIRED
\begin{keywords}
High order cumulants, non-normally distributed data, numerical algorithms
\end{keywords}

% REQUIRED
\begin{AMS}
	65Y05, 15A69, 65C60
\end{AMS}

%%%%%%%%%%%%%%%%%%%%%%%%%%%%%%%%%%%%%%%%%%%%%%%%%%%%%%%%%%%%%%%%%%%%%%%%%%%%%%%%
\section{Introduction}
%%%%%%%%%%%%%%%%%%%%%%%%%%%%%%%%%%%%%%%%%%%%%%%%%%%%%%%%%%%%%%%%%%%%%%%%%%%%%%%%
\subsection{Motivation} Cumulants of the order of $d > 2$ have recently started
to play an important role in the analysis of non-normally distributed
multivariate data. Some potential applications of higher-order cumulants include
signal filtering problems where the normality assumption is not required (see
\cite{geng2011research, latimer2003cumulant} and references therein). Another
application is finding the direction of received signals
\cite{porat1991direction, moulines1991second, cardoso1995asymptotic,
liang2009joint} and signal auto-correlation
analysis~\cite{manolakos2000systematic}. Higher-order cumulants are used in
hyper-spectral image analysis~\cite{geng2015joint}, financial data
analysis~\cite{arismendi2014monte,jondeau2015moment} and neuroimage
analysis~\cite{birot2011localization, becker2014eeg}. Outside the realm of
signal analysis, higher order cumulants can be applied to quantum noise
investigation purposes~\cite{gabelli2009high}, as well as to other types of
non-normally distributed data, such as weather data
\cite{cong2012interdependence, domino2014meteo, ozga2016snow}, various medical
data \cite{pougaza2008using}, cosmological data \cite{scherrer2009finance} or
data generated for machine learning purposes \cite{eban2013dynamic}.

In the examples mentioned above only cumulants of the order of $d \leq 4$ were
used due to growing computational complexity and large estimation errors of high
order statistics. The computational complexity and the use of computational
resources increases considerably with the cumulants' order by a factor of $n^d$,
where $d$ is the order of the cumulant and $n$ is the number of marginal
variables.

Despite the foregoing, cumulants of order $d=6$ of multivariate data were
successfully used in high-resolution direction-finding methods of multi-source
signals (the q-MUSIC algorithm) \cite{chevalier2006high, chevalier2005virtual,
chevalier2007higher, liu2008extended} despite higher variance of the statistic's
estimation. In such an algorithm, the number of signal sources that can be
detected is proportional to the cumulant's order \cite{pal2012multiple}.
Cumulants of the order of $d > 4$ also play an important role in financial data
analyses, as they enable measurement of the risk related to portfolios composed
of many assets \cite{rubinstein2006multi, martin2013consumption}. This is
particularly important during an economic crisis, since higher order cumulants
make it possible to sample larger fluctuation of prices
\cite{muzy2001multifractal}. In \cite{rubinstein2006multi}, cumulants of the
order of 2--6 of multi-asset portfolios were used as a measure of risk seeking
vs. risk aversion. In \cite{martin2013consumption}, it was shown that, during an
economic crisis, cumulants of the order of $d > 4$ are important to analyse
variations of assets and prices of portfolios. Further arguments for the utility
of cumulants of the order of $d > 4$ can be found in \cite{guizot2006hedge,
amin2003hedge} and \cite{domino2016use} where cumulant tensors of the order of
2--6 were used to analyse financial portfolios during an economic crisis.
Finally, let us consider the QCD (Quantum Chromodynamics) phase structure
research area. In \cite{friman2011fluctuations}, the authors have evidenced the
relevance of cumulants of the order of $5$ and $6$ of net baryon number
fluctuations for the analysis of freeze-out and critical conditions in heavy ion
collisions. Standard errors of those cumulant estimations were discussed in
\cite{luo2012error}.

In our study, we introduce an efficient method to calculate higher-order
cumulants. This method takes advantage of the recursive relation between
cumulants and moments as well as their super-symmetric structure. These features
enable us to reduce the computational complexity of the na\"ive algorithm and make
the problem tractable. In order to reduce complexity, we use the idea introduced
in \cite{schatz2014exploiting} to decrease the storage and computational
requirements by a factor of $O(d!)$.

This allows us to handle large data sets and overcome a major problem in
numerical handling of high order moments and cumulants. Consider that the
estimation error of the one-dimensional $d$\textsuperscript{th} central moment
is limited from above by $\sqrt{\frac{M_{2d}}{t}}$ where $M_{2d}$ is the
$(2d)$\textsuperscript{th} central moment and $t$ is number of data samples.
This is discussed further in~Appendix~\ref{app::estimation}. Consequently, the
accurate estimation of statistics of the order of $d > 4$ requires
correspondingly large data sets. In practice, our approach allows us to handle 
cumulants up to the tenth order.

\subsection{Normally and non-normally distributed data}
%%%%%%%%%%%%%%%%%%%%%%%%%%%%%%%%%%%%%%%%%%%%%%%%%%%%%%%%%%%%%%%%%%%%%%%%%%%%%%%%
Let us consider the $n$-dimensional normally distributed random variable
$\mathbf{X}\sim\ \mathcal{N}(\mu, \Sigma)$ where $\Sigma$ is a~positive-definite
covariance matrix and $\mathbf{\mu}$ is a~mean value vector. In this case, the
characteristic function $\tilde{\phi}(\tau)$ and cumulant generating function
$K(\tau)$~\cite{kendall1946advanced,lukacs1970characteristics} are
\begin{equation}\label{eq::h1}
\begin{split}
\tilde{\phi}:\mathbb{R}^{n} \to \mathbb{R} \;& \ \ \tilde{\phi}(\tau)  = 
\exp\left(\tau^{\intercal}\mathbf{\mu}
+\frac{1}{2}\tau^{\intercal} \Sigma \tau\right), \\
K:\mathbb{R}^{n} \to \mathbb{R} \;& \ \ 
K(\tau)  = \log(\tilde{\phi}(\tau)) =
\tau^{\intercal}\mathbf{\mu} +\frac{1}{2}\tau^{\intercal} \Sigma 
\tau.
\end{split}
\end{equation}
It is easy to see that $K(\tau)$ is quadratic in $\tau$, and therefore  its
third and higher derivatives with respect to $\tau$ are zero. As we will in the 
next section, this implies that cumulants of order greater than two are equal 
to zero.
	
If data is characterised by a frequency distribution other than the multivariate
normal distribution, the characteristic function may be expanded in more terms
than quadratic, and cumulants of the order higher than two may have non-zero
elements. This is why they are helpful in distinguishing between normally and
non-normally distributed data or between data from different non-normal
distributions.

\subsection{Basic definitions}{\label{s::def}}
Let us start with a random process generating discrete $n$ dimensional
values. A sequence of $t$ samples of an $n$ dimensional random variable is
represented in the form of matrix $\mathbf{X}\in \R^{t \times n}$ such that
\begin{equation}\label{eq::variable}
	\mathbf{X} = \left[ \begin{array}{ccc}
		x_{1,1} & \dots & x_{1,n}  \\ 
		\vdots & \ddots & \vdots \\ 
		x_{t,1} & \dots & x_{t,n}  \\ 
	\end{array}   \right].
\end{equation}
This matrix can be represented as a sequence of vectors of realisations of $n$
marginal variables $X_i$
\begin{equation}\label{eq::mvariable}
	\mathbf{X} = \left[X_{1}, \dots, X_i, \dots, X_{n} \right],
\end{equation}
where 
\begin{equation}\label{eq::mvvariable}
	X_i = \left[x_{1, i}, \dots, x_{j, i}, \dots, x_{t, i} \right]^{\intercal}.
\end{equation}

In order to study moments and cumulants of $\mathbf{X}$, we need the notion of
super-symmetric tensors. Let us first denote the set $\{1, 2, \ldots, d\}$ as
$1:d$, a permutation of tuple $\mathbf{i}=(i_1, \ldots, i_d)$ as
$\pi(\mathbf{i})$.
\begin{definition}{\label{d::symmetry}}
Let $\AA \in \mathbb{R}^{\overbrace{n \times \cdots \times n}^d}$ be a tensor
with elements $a_\mathbf{i}$ indexed by multi-index
$\mathbf{i}=(i_1,\ldots,i_d)$. Tensor $\AA$ is super-symmetric iff it is
invariant under any permutation $\pi$ of the multi-index, \ie\
	\begin{equation}
		\forall_{\pi} \;\; 
		a_{\mathbf{i}} =
		a_{\pi(\mathbf{i})}.
	\end{equation}
\end{definition}

Henceforth we will write $\AA \in \R ^{[n,d]}$ for super-symmetric tensor 
$\AA$. A list
of all notations used in this paper is provided in Table~\ref{tab::symbols}.

\begin{definition}{\label{d::exp}}
	Let $\mathbf{X} \in \mathbb{R}^{t \times n}$ be as in 
	Eq.~\eqref{eq::variable}.
	We define the $d$\textsuperscript{th} moment as tensor $\MM_d(\mathbf{X}) 
	\in \R ^{[n, 
	d]}$. Its elements are indexed by multi-index $\textbf{i} = (i_1, \ldots, 
	i_d)$ and equal
	\begin{equation}{\label{d::moment}}
		m_{\mathbf{i}} = E(X_{i_1}, \ldots,
		X_{i_d}) = \frac{1}{t} \sum_{l = 1}^t \prod_{k=1}^{d} x_{l,i_k},
	\end{equation}
	where $E(X)$ is the expectational value operator and $X_{i_k}$ a vector of 
	realisations of the $i_k$\textsuperscript{th} marginal variable.
\end{definition}

\begin{definition}{\label{d::cent}}
	Let $\mathbf{X} \in \mathbb{R}^{t \times n}$ be as in 
	Eq.~\eqref{eq::variable}. 
	We define centered variable $\tilde{\mathbf{X}}\in \mathbb{R}^{t \times 
	n}$ as
	\begin{equation}
		\tilde{\mathbf{X}}= [\tilde{X_1}, \ldots, 
		\tilde{X}_i, 
		\ldots, \tilde{X}_n],\label{eq:centered}
		\text{ with }
		\tilde{X}_i = X_i - E(X_i).
	\end{equation}
\end{definition} 
\begin{table}[h]
	\renewcommand{\arraystretch}{1.2}
	\centering
	\begin{tabular}{lp{0.44\textwidth}}
		\textbf{Symbol} & \textbf{Description/explanation} \\ \hline 
		$\mathbf{i} =
		(i_1, \ldots, i_d)$ & $d$ element multi-index\\ \hline $|\mathbf{i}| = 
		d$ &
		size of multi-index (number of elements)\\ \hline $\pi(\mathbf{i})$ &
		permutation of multi-index \\ \hline $1:d$  & set of integers $\{1, 2, 
		\ldots,
		d\}$ \\ \hline $\mathbf{X} \in \mathbb{R}^{t \times n}$ & matrix of $t$
		realisations of $n$ dimensional random variable\\ \hline $X_i ={[x_{1, 
		i},
		\ldots, x_{t, i}]}^{\intercal}$ & vector of $t$ realisations of the
		$i$\textsuperscript{th} marginal random variable\\ \hline $E(X_{i_1}, 
		\ldots,
		X_{i_d}) = \frac{1}{t} \sum_{l = 1}^t \prod_{k=1}^{d} x_{l,i_k}$ &
		expectational value operator\\ \hline $\AA \in \R^{[n, d]}$ & 
		super-symmetric
		$d$ mode tensor of size $n \times \ldots \times n$, with elements 
		$a_{\mathbf{i}}$\\ 
		\hline $\AA
		\in \R^{n_1 \times \cdots \times n_d}$ & $d$ mode tensor of sizes
		$n_1 \times \ldots \times n_d$, with elements $a_{\mathbf{i}}$\\ \hline
		$\tilde{\mathbf{X}} \in \mathbb{R}^{t \times n}$ & matrix of $t$ 
		realisations
		of $n$ dimensional centered random variable\\ \hline $\CC_d(\mathbf{X}) 
		\in
		\mathbb{R}^{[n, d]}$ & the $d$\textsuperscript{th} cumulant tensor of
		$\mathbf{X}$ with elements $c_{\mathbf{i}}$\\ \hline 
		$(\CC_d)_{\mathbf{j}} \in
		\mathbb{R}^{[b, d]}$, $(\MM_d)_{\mathbf{j}} \in \mathbb{R}^{[b, d]}$ & 
		block
		of the $d$\textsuperscript{th} cumulant or moment tensor indexed by
		$\mathbf{j}$ in the block structure.\\ \hline 
		$\MM_d(\tilde{\mathbf{X}}) \in
		\mathbb{R}^{[n, d]}$ & the $d$\textsuperscript{th} central moment 
		tensor of
		$\mathbf{X}$ with elements $m_{\mathbf{i}}$\\ \hline $M_d(X) \in \R$ & 
		the
		$d$\textsuperscript{th} moment of one dimensional $X \in \R^t$
	\end{tabular}
	\caption{Symbols used in the paper.}\label{tab::symbols}
\end{table}

The first two cumulants respectively correspond to the mean vector and the
symmetric covariance matrix of $\mathbf{X}$. Given the following $K(\tau)$ 
estimator:
\begin{equation}\label{eq:cum_ge}
K(\tau) = 
\log\left(\frac{\sum_{j=1}^t\exp\left(\left[x_{j,1}, \ldots,  
	x_{j,n} \right] \cdot\tau \right)}{t}\right),
\end{equation} 
we first introduce the definition of cumulants of arbitrary order and later 
explicitly state definitions for cumulants of order one to 
four~\cite{kendall1946advanced,lukacs1970characteristics}

\begin{definition}{\label{d::cumulants}} Let $(i_1, \ldots, i_d)$ be a 
multi-index with elements $i_k \in 1: n$, and $K(\tau)$ the cumulant generation 
function of a given distribution. The $d$\textsuperscript{th} cumulant 
element is defined by \cite{kendall1946advanced,lukacs1970characteristics} 
\begin{equation}
c_{i_1, \ldots, i_d} = \frac{\partial^d}{\partial \tau_{i_1}, 
\ldots, \partial \tau_{i_d}} \log\left(K(\tau)\right) 
\bigg{|}_{\tau = 0},
\end{equation}
we drop an imaginary unit in definition for a presentation clarity. 
\end{definition}

\begin{definition}We define the first cumulant $\CC_1 \in \R^{[n,1]}$  as
\begin{equation}
	\CC_1(\mathbf{X}) = \left[E(X_1), \ldots,  E(X_n) \right].
\end{equation}
\end{definition}
\begin{definition}

We define the second cumulant $\CC_2 \in \R^{[n, 2]}$ as
\begin{equation}
	\CC_2(\mathbf{X}) 
	= \left[ \begin{array}{ccc}
		E\bigg(\tilde{X}_1\tilde{X}_1\bigg)  & \dots & 
		E\bigg(\tilde{X}_1\tilde{X}_n\bigg)  \\ 
		\vdots & \ddots & \vdots \\ 
				E\bigg(\tilde{X}_n\tilde{X}_1\bigg)  & \dots & 
		E\bigg(\tilde{X}_n\tilde{X}_n\bigg) \\ 
	\end{array}   \right].
\end{equation}
\end{definition}
\begin{definition} We define the third cumulant as a three-mode tensor $\CC_3 
\in \mathbb{R}^{[n, 3]}$ with elements
\begin{equation}\label{eq::c3}
	c_{\mathbf{i}}(\mathbf{X}) = 
	E\left(\tilde{X}_{i_1}\tilde{X}_{i_2}\tilde{X}_{i_3}\right). 
\end{equation}
\end{definition}

Cumulants of order greater than three can be computed from moments
\cite{mccullagh2009cumulants, mccullagh1987tensor}, however the relation is
complex and requires a special notation which is introduced in
Subsection~\ref{ssec::partitions}. To show how complicated the formulas might 
become we state here the partial formula for the fourth cumulant.

\begin{definition} We define the fourth cumulant as a four-mode tensor $\CC_4 
\in
\mathbb{R}^{[n, 4]}$ with elements

\begin{equation}\label{eq::c4n}
\begin{split}
c_{\mathbf{i}}(\mathbf{X}) &= 
E\left(X_{i_1}X_{i_2}X_{i_3}X_{i_4}\right)	
\underbrace{-E\left(X_{i_1}\right)E\left(X_{i_2}X_{i_3}X_{i_4}\right) 
-E\left(X_{i_2}\right)E\left(X_{i_1}X_{i_3}X_{i_4}\right) - \ldots}_{\times 4}
\\
&-\underbrace{E\left(X_{i_1}X_{i_2}\right)E\left(X_{i_3} X_{i_4}\right) - 
	\ldots}_{\times 
	3}+2\left(\underbrace{E\left(X_{i_1}\right)E\left(X_{i_2}\right)E\left(X_{i_3}X_{i_4}\right)+
\ldots}_{\times 6}\right)
 \\ &-6 E\left(X_{i_1}\right)E\left(X_{i_2}\right) 
E\left(X_{i_3}\right) E\left(X_{i_4}\right).
\end{split}
\end{equation}
Switching to the centered variable $\tilde{\mathbf{X}}$, using a fact that
$E(\tilde{X}_i) = 0$, and $c_{\mathbf{i}}(\mathbf{X}) =
c_{\mathbf{i}}(\tilde{\mathbf{X}})$ for $|\mathbf{i}| \geq 2$, cumulants of
order greater than one are mean shift invariant~\cite{mccullagh1987tensor}, we
can write Eq.~\eqref{eq::c4n} in a following manner:
\begin{equation}\label{eq::c4}
\begin{split}
c_{\mathbf{i}}(\mathbf{X}) &= 
E\left(\tilde{X}_{i_1}\tilde{X}_{i_2}\tilde{X}_{i_3}\tilde{X}_{i_4}\right)	
-E\left(\tilde{X}_{i_1}\tilde{X}_{i_2}\right)E\left(\tilde{X}_{i_3}\tilde{X}_{i_4}\right)
\\
&-E\left(\tilde{X}_{i_1}\tilde{X}_{i_3}\right)E\left(\tilde{X}_{i_2}\tilde{X}_{i_4}\right)
-E\left(\tilde{X}_{i_1}\tilde{X}_{i_4}\right)E\left(\tilde{X}_{i_2}\tilde{X}_{i_3}\right).
\end{split}
\end{equation}

\end{definition}

\begin{remark}
Each cumulant tensor $\CC_d$ as well as each moment tensor $\MM_d$ is
super-symmetric \cite{barndorff1989asymptotic}.
\end{remark}
As the formula for a cumulant of an arbitrary order is very complex, our core
result is the numerical handling of a cumulant and is discussed in depth in
Section~\ref{sec::cumulant}. To compute cumulant tensor of order $d$ we use
central moment tensors of order $2,3,d-2$ and $d$ and take advantage of cumulant
and moment tensors super-symmetry. Importantly we do not need to determine
$(d-1)$\textsuperscript{th} moment tensor.

%%%%%%%%%%%%%%%%%%%%%%%%%%%%%%%%%%%%%%%%%%%%%%%%%%%%%%%%%%%%%%%%%%%%%%%%%%%%%%%%
\section{Moment tensor calculation}{\label{sec::moment}}
%%%%%%%%%%%%%%%%%%%%%%%%%%%%%%%%%%%%%%%%%%%%%%%%%%%%%%%%%%%%%%%%%%%%%%%%%%%%%%%%
To provide a simpler example, we start with algorithms for calculation of the
moment tensor. Next, in Section~\ref{sec::cumulant}, those algorithms will be utilised to
recursively calculate the cumulants.

%%%%%%%%%%%%%%%%%%%%%%%%%%%%%%%%%%%%%%%%%%%%%%%%%%%%%%%%%%%%%%%%%%%%%%%%%%%%%%%%
\subsection{Storage of super-symmetric tensors in block
structures}\label{ssec::storage}
%%%%%%%%%%%%%%%%%%%%%%%%%%%%%%%%%%%%%%%%%%%%%%%%%%%%%%%%%%%%%%%%%%%%%%%%%%%%%%%%
In this section, we are going to follow the idea introduced by Schatz et
al.~\cite{schatz2014exploiting} concerning the use of blocks to store symmetric
matrices and super-symmetric tensors in an efficient way. To make the
demonstration more accessible, we will first focus on the matrix case. Let us 
suppose we have symmetric matrix $\CC_2 \in \mathbb{R}^{[n, 2]}$. We can store
the matrix in blocks and store only upper triangular blocks,
\begin{equation}
\CC_2 = \left[ \begin{array}{cccc}
({\CC_2})_{11} & ({\CC_2})_{12} & \cdots & ({\CC_2})_{1\bar{n}} \\ 
\text{NULL} & ({\CC_2})_{22} & \cdots &  ({\CC_2})_{2\bar{n}} \\ 
\vdots & \vdots & \ddots & \vdots \\
\text{NULL} & \text{NULL} & \cdots & ({\CC_2})_{\bar{n} \bar{n}} \\ 
\end{array}   \right],
\end{equation}
where NULL represents an empty block, and $\bar{n}=\lceil \frac{n}{b} \rceil$.
Entries below the diagonal do not need to be stored and calculated as they are
redundant. Each block $({\CC_2})_{j_1, j_2}: j_1 \leq j_2 \wedge j_2 < \bar{n}$
is of size $b \times b$. Blocks $({\CC_2})_{j_1,\bar{n}}: j_1 < \bar{n}$ are of
size $b \times b_l$, and block $({\CC_2})_{\bar{n}, \bar{n}}$ is of size $b_l
\times b_l$, where:
\begin{equation}
b_l = (n-b(\bar{n}-1)).
\end{equation}
This representation significantly reduces the overall storage footprint while
still providing opportunities to achieve high computational performance.

This representation can easily be extended for purposes of super-symmetric
tensors. Let us assume that $\CC_d \in \mathbb{R}^{[n, d]}$ is a super-symmetric
tensor. All data can be stored in blocks $({\CC_d})_{j_1, \ldots, j_d} \in
\mathbb{R}^{b_{j_1} \times \cdots \times b_{j_d}}$. If indices $j_1, \ldots,
j_d$ are not sorted in an increasing order, such blocks are redundant and
consequently replaced by NULL. Similarly to the matrix case we have
\begin{equation}
b_{j_p} =  \left\{ \begin{array}{ll}
b, & \textrm{if $j_p < \bar{n}$},\\
b_l, & \textrm{if $j_p = \bar{n}.$}
\end{array} \right.
\end{equation} 
In the subsequent sections we present algorithms for moment and cumulant tensor
calculation and storage. For simplicity, we assume that $b|n$ and $\bar{n} =
\frac{n}{b}$. The generalization is straightforward and, at this point, would
only obscure the main idea.

Henceforth each block is a hypercube of size $b^d$ and there are
$\binom{\bar{n}+d-1}{\bar{n}}$ such unique blocks \cite{schatz2014exploiting}.
Such storage scheme, proposed in \cite{schatz2014exploiting}, requires the
storage of $b^d \binom{\bar{n}+d-1}{\bar{n}}$ elements.

%%%%%%%%%%%%%%%%%%%%%%%%%%%%%%%%%%%%%%%%%%%%%%%%%%%%%%%%%%%%%%%%%%%%%%%%%%%%%%%%
\subsection{The algorithm}\label{sec::algorithm}
%%%%%%%%%%%%%%%%%%%%%%%%%%%%%%%%%%%%%%%%%%%%%%%%%%%%%%%%%%%%%%%%%%%%%%%%%%%%%%%%
In this and following sections, we present the moment and cumulant calculation
algorithms that use the block structure. To compute the $d$\textsuperscript{th}
moment tensor we use Def. \ref{d::exp}. Algorithm \ref{alg:center} computes a
single block of the tensor, while Algorithm \ref{alg:cm} computes the whole
tensor in the block structure form.

\begin{algorithm}
	\caption{A single block of central moment, used to perform Algorithm 
		\ref{alg:cm} \label{alg:center}}
	\begin{algorithmic}[1]
		\State \textbf{Input}: $\mathbf{X} \in \mathbb{R}^{t \times n}$	
		--- data matrix; $(j_1, \ldots, j_d)$ --- 
		multi-index of a 
		block; $b$ --- block's size.
		\State \textbf{Output}: $(\MM_d) \in \mathbb{R}^{[b, d]}$ --- 
		a single block of the block structure. 
		\Function{momentblock}{$\mathbf{X}, (j_1, \ldots, j_d), b$}
		\For{$i_1 \gets 1 \textrm{ to } b, \ldots, i_d \gets 1 \textrm{ to } 
			b$}
		\State $(m_{i_1, \ldots, i_d}) = \frac{1}{t} \sum_{l = 1}^{t} 
		\left(\prod_{k=1}^{d} \mathbf{X}_{l, (j_k-1)b+i_k}
		\right)$ 
		\EndFor
		\State \Return $(\MM_d)$
		\EndFunction
	\end{algorithmic}
\end{algorithm}	

\begin{algorithm}
	\caption{The $d$\textsuperscript{th} moment tensor stored as a block 
	structure
	\label{alg:cm}}
	\begin{algorithmic}[1]	
		\State 	\textbf{Input}: $\tilde{\mathbf{X}}  \in \mathbb{R}^{t	\times 
		d}$ --- data matrix; $d$ --- moment's 
		order; $b$ --- block's size.
\State \textbf{Output}: ${\MM_d} \in \R^{[n, d]}$ --- $d$\textsuperscript{th} 
moment tensor
stored as the block structure. \Function{moment}{$\mathbf{X}, d, b$}
\State $\bar{n} = \frac{n}{b}$ \For{$j_1 \gets 1 \textrm{ to } \bar{n}, j_{2}
	\gets j_1 \textrm{ to } \bar{n}, \ldots, j_{d} \gets j_{d-1} \textrm{ to }
	\bar{n}$} \State $(\MM_d)_{(j_1, \ldots, j_d)} = \textrm{\footnotesize
	MOMENTBLOCK \normalsize}(\mathbf{X}, (j_1, \ldots, j_d), b)$ \EndFor
\State \Return ${\MM_d}$ \EndFunction
	\end{algorithmic}
\end{algorithm}	

Based on \cite{schatz2014exploiting} and the discussion in the previous
subsection, we can conclude that reduction of redundant blocks reduces the
storage and computational requirements of the $d$\textsuperscript{th} moment
tensor by a factor of $d!$ for $d \ll n$ compared to the na\"ive algorithm. The
detailed analysis of the computational requirements will be presented in
Section~\ref{sec::performance}.
%%%%%%%%%%%%%%%%%%%%%%%%%%%%%%%%%%%%%%%%%%%%%%%%%%%%%%%%%%%%%%%%%%%%%%%%%%%%%%%%
\subsection{Parallel computation of moment tensor}{\label{ssec::parallel}}
%%%%%%%%%%%%%%%%%%%%%%%%%%%%%%%%%%%%%%%%%%%%%%%%%%%%%%%%%%%%%%%%%%%%%%%%%%%%%%%%
For large $t$, it is desirable to speedup the moment tensor calculation further.
This can be achieved via  a simple parallel scheme. Let us suppose for the sake
of simplicity, that we have $p$ processes available, and $p|t$. Starting with
data $\textbf{X} \in \R^{t \times n}$ we can split them into $p$ non overlapping
subsets $\textbf{X}_s \in \R^{\frac{t}{p} \times n}$. In the first step, for
each subset, we compute in parallel moment tensor $\MM_d(\mathbf{X}_s)$ using
Algorithm~\ref{alg:cm}. In the second step, we perform the following reduction
\begin{equation}\label{eq::mapred}
\MM_d(\textbf{X}) = 
\frac{1}{p}\sum_{s=1}^p\MM_d(\textbf{X}_s).
\end{equation}
The elements of the tensor under the sum on the RHS are
\begin{equation}{\label{d::momentp}}
m_{\mathbf{i}}(\mathbf{X}_s) = \frac{p}{t} \sum_{l = 
(\frac{t}{p}-1)s+1}^{\frac{ts}{p}} 
\prod_{k=1}^{|\mathbf{i}|} x_{l,i_k}.
\end{equation}
The element of the moment tensor of $\mathbf{X}$ is
\begin{equation}{\label{d::momentp1}}
m_{\mathbf{i}}(\mathbf{X}) = \frac{1}{t} \sum_{s = 1}^p \sum_{l = 
(\frac{t}{p}-1)s+1}^{\frac{ts}{p}} \prod_{k=1}^{|\mathbf{i}|} x_{l,i_k} = 
\frac{1}{p} \sum_{s=1}^p m_{\mathbf{i}}(\mathbf{X}_s).
\end{equation}
These steps are summarised in Algorithm~\ref{alg:pal}.
\begin{algorithm}
	\caption{Parallel computation of the $d$\textsuperscript{th} moment tensor
		\label{alg:pal}}
	\begin{algorithmic}[1]	
		\State 	\textbf{Input}: $\tilde{\mathbf{X}}  \in \mathbb{R}^{t	\times 
			n}$ --- data matrix; $d$ --- moment's order; $b$ --- block's size; 
			$p$	---	number of processes.
		\State \textbf{Output}: ${\MM_d} \in \R^{[n, d]}$ --- 
		$d$\textsuperscript{th} moment 
		tensor
		stored as the block structure. \Function{momentnc}{$\tilde{\mathbf{X}}, 
		d, 
		p, b$}
		\State $\mathbf{X} \rightarrow [\mathbf{X}_1, \ldots,  \mathbf{X}_s, 
		\ldots \mathbf{X}_p]: {(x_s)}_{l,i} = 
		x_{\left(\frac{t}{p}-1\right)s+l,i}$
		\For{$s \gets 1 \textrm{ to } p$} \Comment{perform in parallel}
		\State $\MM_d(\textbf{X}_s) = \textrm{\footnotesize
			MOMENT\normalsize}(\mathbf{X}_s, d, b)$ 
		\EndFor
		\State $\MM_d(\textbf{X}) = 
		\frac{1}{p}\sum_{s=1}^p\MM_d(\textbf{X}_s)$ \Comment{reduce scheme}
		\State \Return ${\MM_d}$ \EndFunction
	\end{algorithmic}
\end{algorithm}	
%%%%%%%%%%%%%%%%%%%%%%%%%%%%%%%%%%%%%%%%%%%%%%%%%%%%%%%%%%%%%%%%%%%%%%%%%%%%%%%%
\section{Calculation of cumulant tensors}\label{sec::cumulant}
%%%%%%%%%%%%%%%%%%%%%%%%%%%%%%%%%%%%%%%%%%%%%%%%%%%%%%%%%%%%%%%%%%%%%%%%%%%%%%%%
At this point, we can define our main result, \ie~an algorithm for calculating 
cumulants of arbitrary order of multi-dimensional data.
%%%%%%%%%%%%%%%%%%%%%%%%%%%%%%%%%%%%%%%%%%%%%%%%%%%%%%%%%%%%%%%%%%%%%%%%%%%%%%%%
\subsection{Index partitions and permutations}\label{ssec::partitions}
%%%%%%%%%%%%%%%%%%%%%%%%%%%%%%%%%%%%%%%%%%%%%%%%%%%%%%%%%%%%%%%%%%%%%%%%%%%%%%%%
In this section, we present a recursive formula that can be used to calculate 
the $d$\textsuperscript{th} cumulant of $\mathbf{X}$. We begin with some
definitions, mainly concerning combinatorics, before discussing the general
formula.

\begin{definition}{\label{d::part}} Let $\mathbf{k} = (k_1, \ldots, k_d): k_i = 
i$, and $ \sigma \in 1:d$. Partition $P_{\sigma}(\mathbf{k})$ of tuple
$\mathbf{k}$ is the division of $\mathbf{k}$ into $\sigma$ non-crossing
sub-tuples: $P_{\sigma}(\mathbf{k}) = (\mathbf{k}_1, \ldots,
\mathbf{k}_{\sigma})$,
	\begin{equation}
	\bigcup_{r=1}^{\sigma} \mathbf{k}_r = \mathbf{k} \ \wedge\ \forall_{r \neq 
	r'} \
	\mathbf{k}_r \cap 
	\mathbf{k}_r' = \emptyset. 
	\end{equation}
\end{definition}
In what follows, we will denote the permutations of a tuple of tuples 
$(\mathbf{i}_1,
\ldots, \mathbf{i}_{\sigma})$, as $\pi'(\mathbf{i}_1, \ldots,
\mathbf{i}_{\sigma})$.
\begin{definition}\label{d::class}{$[P_{\sigma}(\mathbf{k})]$ --- the 
representative of the equivalence class of partitions.}
	Let $ P_{\sigma}(\mathbf{k}) = (\mathbf{k}_1, \ldots, \mathbf{k}_{\sigma})$ 
	and $P'_{\sigma}(\mathbf{k}) = (\mathbf{k}'_1, \ldots, 
	\mathbf{k}'_{\sigma})$ be partitions of $\mathbf{k}$. Let us introduce the 
following equivalence relation:
\begin{equation}
	P_{\sigma}(\mathbf{k}) \sim P'_{\sigma}(\mathbf{k}) \Leftrightarrow
	\Big(\exists_{\pi'} \ \forall_{r \in 1:\sigma} \ \exists_{\pi_r}:
	(\mathbf{k}_1,\ldots, \mathbf{k}_\sigma) = 
	\pi'\left(\pi_{1}(\mathbf{k}'_{1}),
	\ldots, \pi_{\sigma}(\mathbf{k}'_{\sigma})\right) \Big).
\end{equation}
	This relation defines the equivalence class. Henceforth we will take only 
	one representative of each equivalence class and denote it as 
	$[P_{\sigma}(\mathbf{k})]$. The representative will be such that all
	$\mathbf{k}_r$ are sorted in an increasing order. We will denote a set of 
	all
	such equivalence classes as $\{[P_{\sigma}(\mathbf{k})]\}$.
\end{definition}
\begin{remark}
	The number of partitions of set $\mathbf{k}$ of size $d$ into $\sigma$ 
	parts is given by the Stirling Number of the second kind, 
	\cite{grahamconcrete}
	\begin{equation}{\label{eq::n1}}
		\# \{[P_{\sigma}(\mathbf{k})]\}  = S(d, \sigma) = \frac{1}{\sigma !} 
		\sum_{j=0}^{\sigma} (-1)^{(\sigma-j)} \binom{\sigma}j j^d.
	\end{equation}
\end{remark}

\begin{definition}{\label{d::mop}}
	Consider tensors $\CC_{d_1} \in \mathbb{R}^{[n, d_1]}$, $\CC_{d_2} \in
	\mathbb{R}^{[n, d_2]}$ indexed by $\mathbf{i}$ and $\mathbf{i'}$ 
	respectively.
	Their outer product $\CC_{d_1}\otimes\CC_{d_2}= \AA_{d_1+d_2} \in
	\mathbb{R}^{\overbrace{n\times\ldots\times n}^{d_1+d_2}}$ is defined as
	\begin{equation}
	 	a_{(\mathbf{i}, \mathbf{i'})} = c_{\mathbf{i}} c_{\mathbf{i'}},
	\end{equation}
	where $(\mathbf{i}, \mathbf{i'})$ denotes multi-index $(i_1, \ldots, 
	i_{d_1}, i'_1, \ldots, i'_{d_2})$.	
\end{definition}
As an example consider the outer product of symmetric matrix $\CC_2$ by itself: 
$\AA_4 = \CC_2 \otimes \CC_2$, that is
only partially symmetric, $a_{i_1, i_2, i_3, i_4} = c_{i_2, i_1, i_3, i_4} =
c_{i_1, i_2 i_4, i_3} = c_{i_3, i_4, i_1, i_2}$, but in general $c_{i_1,
	i_2, i_3, i_4} \neq c_{i_1, i_3, i_2, i_4} \neq c_{i_1, i_4, i_3, i_2}$.
To obtain a super-symmetric outcome of the outer product of super-symmetric 
tensors, we need to apply the following symmetrisation procedure.

\begin{definition}{\label{d::k}}{The sum of outer products of super-symmetric 
tensors.}
Let $\AA_d \in \mathbb{R}^{[n,d]}$ be a tensor indexed by $\mathbf{i} = 
(i_1, 
\ldots, i_d)$. Let $\mathbf{k}_r$ be a sub-tuple of its modes 
according to Def. \ref{d::class}, and let $\mathbf{i}_{\mathbf{k}_r} =
\left(i_{(\mathbf{k}_r)_{1}}, \ldots, 
i_{(\mathbf{k}_r)_{|\mathbf{k}_r|}}\right)$. For the given $\sigma$, we 
define the sum of outer products of $\CC_{d_r} \in \mathbb{R}^{[n, d_r]}$ 
where $r \in 1: \sigma$ and $\sum_{r=1}^{\sigma} d_r = d$, using the 
elementwise notation, as 
\begin{equation}
a_{\mathbf{i}} = \sum_{\zeta\in \{[P_{\sigma}(1 : d)]\}} 
\prod_{\mathbf{k}_r \in \zeta} 
c_{\mathbf{i}_{\mathbf{k}_r}}.
\end{equation}
We will use the following abbreviation using tensor notation
\begin{equation}{\label{eq::am}}
\AA_d = \sum_{\zeta\in \{[P_{\sigma}(1 : d)]\}} \bigotimes_{\mathbf{k}_r 
\in \zeta} \CC_{(\mathbf{k}_r)}.
\end{equation}
\end{definition}

Consider $\AA_d$ as in Eq.~\eqref{eq::am} where $\CC_{d_r} \in \R^{[n, d_r]}$
are super-symmetric and $\CC_{d_r} = \CC_{d_{r'}}$ iff $d_r = d_{r'}$ and
$\mathbf{i}$ is a multi-index of $\AA_d$. The sum over all representatives of 
equivalence classes $\{[P_{\sigma}(1:d)]\}$ fully symmetrises the outer
product, and therefore $\AA_d$ is super-symmetric. In other words, due to the
super-symmetry, any permutation of multi--index $\mathbf{i}$ of $\AA_d$
that leads only to a permutation of indices inside some $\CC_{d_r}$ refers to
the same value of $\AA_d$. Any permutation of $\mathbf{i}$ that leads only to
the switch between $\CC_{d_r}$ and $\CC_{d_{r'}}$ inside an outer product in
Eq.~\eqref{eq::am} also refers to the same value of $\AA_d$. Any other
permutation of $\mathbf{i}$ that cannot be represented as above switches
between equivalence classes as well, and so it switches between elements of sum 
Eq.~\eqref{eq::am} and  refers to the same value of $\AA_d$.

\begin{example}{\label{d::mops}}
	Consider $\CC_1 \in \R^{[n,1]}, \CC_2 \in \mathbb{R}^{[n, 2]}, 
	\CC_3 \in 
	\mathbb{R}^{[n, 3]}$, and 
	$\AA_4 \in \mathbb{R}^{[n, 4]}$ such that
	\begin{equation}
		\AA_4 = \sum_{\zeta \in \{[P_{2}(1:4)]\}}
		\bigotimes_{\mathbf{k}_r\in\zeta}\CC_{(\mathbf{k}_r)},
	\end{equation}
	then 
	\begin{equation}
	\begin{split}
		a_{i_1, i_2, i_3, i_4}
		&= c_{i_1, i_2}c_{i_3, i_4}+c_{i_1, 
		i_3}c_{i_2, i_4}+c_{i_1, i_4}c_{i_2, i_3} \\ &\phantom {=\ 
		}+c_{i_1}c_{i_2,i_3,i_4} + 
		c_{i_2}c_{i_1,i_3,i_4} + c_{i_3}c_{i_1,i_2,i_4} + 
		c_{i_4}c_{i_1,i_2,i_3},
		\end{split}
	\end{equation}
such $\AA_4$ is super-symmetric, since there is no permutation of $(i_1, i_2, 
i_3, i_4)$ that changes its elements, \ie \ $a_{i_1, i_2, i_3, i_4} 
= a_{i_2, i_1, i_3, i_4} = a_{i_3, i_2, i_1, i_4} = a_{i_3, i_4, i_1, i_2} = 
\ldots$ .
\end{example}
%%%%%%%%%%%%%%%%%%%%%%%%%%%%%%%%%%%%%%%%%%%%%%%%%%%%%%%%%%%%%%%%%%%%%%%%%%%%%%%%
\subsection{Cumulant calculation formula}{\label{s::f}}
%%%%%%%%%%%%%%%%%%%%%%%%%%%%%%%%%%%%%%%%%%%%%%%%%%%%%%%%%%%%%%%%%%%%%%%%%%%%%%%%
The following recursive relation can be used to relate moments and cumulants of 
$\mathbf{X}$:
\begin{equation}\label{eq::cumt}
\mathbb{R}^{[n,d]} \ni \MM_d(\mathbf{X}) =  \sum_{\sigma = 1}^{d} 
\sum_{\zeta \in \{[P_{\sigma}(1:d)]\}} 
\bigotimes_{\mathbf{k}_r 
\in \zeta} \CC_{(\mathbf{k}_r)}(\mathbf{X}).
\end{equation}
This can be written in an elementwise manner as 
in~\cite{barndorff1989asymptotic}
\begin{equation}\label{eq::cum}
m_{\mathbf{i}}(\mathbf{X}) = \sum_{\sigma = 1}^{d} \sum_{\zeta \in 
\{[P_{\sigma}(1:d)]\}} \prod_{\mathbf{k}_r \in \zeta} 
c_{\mathbf{i}_{\mathbf{k}_r}}(\mathbf{X}).
\end{equation}
For the sake of completeness, we present an alternative proof of
Eq.~(\ref{eq::cum}) in Appendix~\ref{app::recurrence}.

In order to compute $\CC_d(\mathbf{X})$, let us consider the case where $\sigma 
= 1$ separately. By definition, $[P_{\sigma = 1}(1:d)] = (1,\ldots, d)$, so:
\begin{equation}\label{eq::ctn}
\MM_d(\mathbf{X}) = \CC_d(\mathbf{X}) + \sum_{\sigma = 2}^{d} 
\sum_{\zeta \in \{[P_{\sigma}(1:d)]\}}  
\bigotimes_{\mathbf{k}_r 
\in \zeta} \CC_{(\mathbf{k}_r)}(\mathbf{X}).
\end{equation}
The $d$\textsuperscript{th} cumulant tensor can be calculated given the
$d$\textsuperscript{th} moment tensor and cumulant tensors of the order of $r 
\in 1:(d-1)$
\begin{equation}\label{eq::cum_rec_gen}
\mathbb{R}^{[n,d]} \ni \CC_d(\mathbf{X}) = \MM_d(\mathbf{X}) - \sum_{\sigma =
2}^{d} \sum_{\zeta \in \{[P_{\sigma}(1:d)]\}} \bigotimes_{\mathbf{k}_r \in
\zeta} \CC_{(\mathbf{k}_r)}(\mathbf{X}) .
\end{equation}

To simplify Eq.~\eqref{eq::cum_rec_gen}, let us observe that cumulants of the 
order of two or higher for a non-centered variable and a centered variable are 
equal. The
first order cumulant for a centered variable is zero. Hereafter, we introduce
partitions into sub-tuples of size larger than one.
\begin{definition}{\label{d::part2}}Let $\mathbf{k} = (1, \ldots, d)$, 
	and $ \sigma \in 1:d$. The at least two element partition 
	$P_{\sigma}^{(2)}(\mathbf{k})$ of 
	tuple $\mathbf{k}$ is the division of $\mathbf{k}$ into $\sigma$ 
	sub-tuples: $P_{\sigma}^{(2)}(\mathbf{k}) = (\mathbf{k}_1, \ldots, 
	\mathbf{k}_{\sigma})$, such that
	\begin{equation}
		\bigcup_{r=1}^{\sigma} \mathbf{k}_r = \mathbf{k} \ \wedge \ \forall_{r 
		\neq r'} \  
		\mathbf{k}_r \cap \mathbf{k}_r' = \emptyset \ \wedge \ \forall_{r} \ 
		|\mathbf{k}_r| \geq 2.
	\end{equation}	
	%the sum of sets of all sub multi--indices gives an original multi index 
	%$I$, 
	%and each pair if sub multi--indices is disjoint.
\end{definition}
The definition of the representative of equivalence class
$[P^{(2)}_{\sigma}(\mathbf{k})]$  and the set of such representatives
$\{[P^{(2)}_{\sigma}(\mathbf{k})]\}$ are analogous to Def. \ref{d::class}.
Consequently, we can derive the final formula

\begin{equation}\label{eq::cum_r}
\begin{split}
\CC_d(\mathbf{X}) = \CC_d(\mathbf{\tilde{X}}) &= \MM_d(\mathbf{\tilde{X}}) 
- \sum_{\sigma = 2}^{d}	\sum_{\zeta \in \{[P_{\sigma}(1:d)]\}} 
\bigotimes_{\mathbf{k}_r \in \zeta} 
\CC_{(\mathbf{k}_r)}(\mathbf{\tilde{X}}) \\&= 
\MM_d(\mathbf{\tilde{X}}) - \sum_{\sigma = 2}^{\sigma_{\max}} 
\sum_{\zeta \in \{[P^{(2)}_{\sigma}(1:d)]\}} 
\bigotimes_{\mathbf{k}_r 
\in \zeta} \CC_{(\mathbf{k}_r)}(\mathbf{X}).
\end{split}
\end{equation}
Let us determine the $\sigma_{\max}$ limit. If $d$ is even, it can be divided
into at most $\sigma_{\max} = \frac{d}{2}$ parts of size two; if $d$ is odd, it
can be divided into at most $\sigma_{\max} = \frac{d-1}{2}$ parts:
$\frac{d-1}{2}-1$ parts of size two and one part of size three. Hence we can
conclude that $\sigma_{\max}=\lfloor\frac{d}{2}\rfloor$.

As a simple example, consider the cumulants of the order of three and four. 
Since
$\forall_{\sigma} \{[P^{(2)}_{\sigma}(1:3)]\} = \emptyset \ \wedge \
\{[P^{(2)}_{\sigma}(1:2)]\} = \emptyset$, then $\CC_2(\mathbf{X}) =
\MM_2(\mathbf{\tilde{X}})$ and $\CC_3(\mathbf{X}) = \MM_3(\mathbf{\tilde{X}})$,
\ie~the second cumulant matrix and the third cumulant tensor are simply the
second and the third central moments. Formulas for cumulant tensors of the 
order greater than three are more complicated. For example, consider the
$4$\textsuperscript{th} cumulant tensor
\begin{equation}\label{eq::ct4}
\mathbb{R}^{[n, 4]} \ni
\CC_4(\mathbf{X}) = \MM_4(\mathbf{\tilde{X}}) - \sum_{\zeta \in 
\{[P_{2}^{(2)}(1:4)]\}} 
\bigotimes_{\mathbf{k}_r\in\zeta}\CC_{\mathbf{k}_r}(\mathbf{X}).
\end{equation}
Using the elementwise notation, where $\mathbf{i} = (i_1, i_2, i_3, i_4)$, we 
have
\begin{equation}{\label{eq:c4}}
	c_{\mathbf{i}}(\mathbf{X}) = 
	m_{\mathbf{i}}(\tilde{\mathbf{X}}) -  
	c_{i_1, i_2}(\mathbf{X}) c_{i_3, i_4}(\mathbf{X}) -  
	c_{i_1, i_3}(\mathbf{X}) c_{i_2, i_4}(\mathbf{X}) -  
	c_{i_1, i_4}(\mathbf{X}) c_{i_2, i_3}(\mathbf{X}).
\end{equation}

%%%%%%%%%%%%%%%%%%%%%%%%%%%%%%%%%%%%%%%%%%%%%%%%%%%%%%%%%%%%%%%%%%%%%%%%%%%%%%%%
\subsection{Algorithms to compute cumulant tensors}\label{ssec::alg}
%%%%%%%%%%%%%%%%%%%%%%%%%%%%%%%%%%%%%%%%%%%%%%%%%%%%%%%%%%%%%%%%%%%%%%%%%%%%%%%%
Let us suppose that $(\BB_{\mathbf{i}})_{\mathbf{j}}$ is the
$\mathbf{i}$\textsuperscript{th} element of the $\mathbf{j}$\textsuperscript{th}
block of the super--symmetric tensor of the order of $|\mathbf{i}| = 
|\mathbf{j}| = d$.
Similarly, $(\CC_{\mathbf{i}_{\mathbf{k}}})_{\mathbf{j}_{\mathbf{k}}}$ is the
$\mathbf{i}_{\mathbf{k}}$\textsuperscript{th} element of the
$\mathbf{j}_{\mathbf{k}}$\textsuperscript{th} block of the
$|\mathbf{i}_{\mathbf{k}}|=|\mathbf{j}_{\mathbf{k}}|$\textsuperscript{th}
cumulant tensor according to Def.~\ref{d::k}---we skip now $r$ in $\mathbf{k}_r$
for brevity. With reference to Def.~\ref{d::k} and Def.~\ref{d::class}, 
$\mathbf{k}$ is always sorted and from the properties of the block structure 
$\mathbf{j}$ is also sorted, hence $\mathbf{j}_{\mathbf{k}}$ is sorted as well. 
To determine
$\{[P^{(2)}_{\sigma}(1:d)]\}$ we use modified Knuth's algorithm $7.2.1.5H$
\cite{knuth1998art}. Now we have all the components to introduce
Algorithm~\ref{alg:mlc} which computes a super-symmetric sum of outer products
of lower order cumulants. Algorithm~\ref{alg:mlc} computes the inner sum of
Eq.~\eqref{eq::cum_r} and takes advantages of the super-symmetry of tensors by 
using the block structure.

Finally, Algorithm~\ref{alg:ncc} computes the $d$\textsuperscript{th} cumulant
tensor. It uses Eq.~\eqref{eq::cum_r} to calculate the cumulants and
importantly takes advantage of the super-symmetry of tensors, because it refers 
to Algorithm~\ref{alg:cm} (moment tensor calculation) and 
Algorithm~\ref{alg:mlc} that both use the block structure.

\begin{algorithm}
	\caption{Sum of outer products of the $\sigma$ cumulants 
		\label{alg:mlc}}
	\begin{algorithmic}[1]	
		\State \textbf{Input} $d$ --- order of output; $\sigma$ --- number of 
		subsets for partitions; $\CC_{2}, \ldots, 
		\CC_{r} \in \R^{[n, r]}, 
		\ldots, \CC_{d - 2}$ --- lower cumulants stored as a block 
		structure. 	
		\State \textbf{Output} $\BB \in \R^{[n, d]}$ -- sum of outer products
		stored as the block structure.
		\Function{outerpodcum}{${d, \sigma, \CC_{2}, \ldots, \CC_{d - 2}}$}
		\Comment{$\bar{n} = \frac{n}{b}$, $b$ --- block size}
		\For{$i_1 \gets 1 \textrm{ to } b, \ldots, i_d \gets 1 \textrm{ to } b$}
		\For{$j_1 \gets 1 \textrm{ to } \bar{n}, j_{2} \gets j_1 \textrm{ to } 
		\bar{n}, \ldots, j_{d} \gets j_{d-1} \textrm{ to } \bar{n}$} 
		\State 
		\begin{equation*} 
		\begin{split}
		\mathbf{i} &= (i_1, \ldots, i_d) \ \ \  \mathbf{j} = (j_1, \ldots, j_d) 
		\\ (\BB_{\mathbf{i}})_{\mathbf{j}} &= \sum_{\zeta \in 
		\{[P^{(2)}_{\sigma}(1:d)]\}} \prod_{\mathbf{k} \in \zeta} 
		\left(C_{\mathbf{i}_{\mathbf{k}}}\right)_{\mathbf{j}_{\mathbf{{k}}}}
		\end{split}
		\end{equation*}
	\EndFor
		\EndFor
		\State \Return $\BB$
		\EndFunction	
	\end{algorithmic}
\end{algorithm}	

\begin{algorithm}
	\caption{The $d$\textsuperscript{th} cumulant, using  
		Eq.~\eqref{eq::cum_r}.
		\label{alg:ncc}}
	\begin{algorithmic}[1]
		\State	\textbf{Input}: $\tilde{\mathbf{X}} \in \mathbb{R}^{t \times 
		n}$ --- 
		matrix of centered data; 
		$d \geq 3$ --- order; $\CC_{2}, \ldots, \CC_{d - 
			2}$ --- lower cumulants structured in blocks of size 
		$b^i$ each.
		\State \textbf{Output}: $\CC_d \in \R^{[n, d]}$ --- the 
		$d$\textsuperscript{th} 
		cumulant stored as the block structure.
		\Function{cumulant}{$\tilde{\mathbf{X}}, d, \CC_{2}, \ldots, \CC_{d 
		- 2}$}
		\State
		$\CC_d = \textrm{\footnotesize MOMENT\normalsize}(\tilde{\mathbf{X}}, 
		d, b) - \sum_{\sigma = 
		2}^{\lfloor\frac{d}{2}\rfloor}\textrm{\footnotesize 
		OUTERPRODCUM\normalsize}(d, 
		\sigma, \CC_{2}, 
		\ldots, \CC_{d - 2})$
		\State \Return $\CC_d$ 
		\EndFunction
	\end{algorithmic}
\end{algorithm}	

\section{Implementation}\label{sec::implementation} All algorithms presented in this paper 
are implemented in the \texttt{Julia} programming language 
\cite{bezanson2012julia,
bezanson2014julia}. \texttt{Julia} is a high level language in which
multi-dimensional tables are first class types \cite{bezanson2014array}. For 
purposes of the algorithms, two modules were created. In the first one,
\texttt{SymmetricTensors.jl}, \cite{st} the block structure of super-symmetric
tensors was implemented. In the second module, \texttt{Cumulants.jl}, \cite{cum}
we used the block structure to compute and store moment and cumulant tensors.
The implementation of cumulants calculation uses multiprocessing
primitives built into the \texttt{Julia} programming language:
\emph{remote references} and \emph{remote calls}. A remote reference is an
object that allows any process to reference an object stored in a
specific process. A remote call allows a process to request a function call
on certain arguments on another process.

\section{Performance analysis}\label{sec::performance} This section is dedicated 
to the performance analysis of the core elements of our algorithms. These are
Eq.~\eqref{d::moment} which calculates the moment tensor and
Eq.~\eqref{eq::cum_r} which calculates the cumulant tensor. First, we discuss 
theoretical analysis and then focus on the performance of our 
implementation. In the final subsection, we show how our implementation 
compares to the current state of the art.

\subsection{Theoretical analysis}{\label{ssec::analysis}} We start by discussing the
performance of the moment tensor.  With reference to Section~\ref{sec::moment}, let 
us recall that storage of the moment tensor requires storage of the $b^d
\binom{\bar{n}+d-1}{\bar{n}}$ floating-point numbers. We can approximate $b^d
\binom{\bar{n}+d-1}{\bar{n}} \approx \frac{n^d}{d !}$ for $d \ll
n$~\cite{schatz2014exploiting}. Since we usually calculate cumulants of the 
order of $\leq10$ and we deal with high dimensional data, %$M\geqslant 100,$
we need approximately $\frac{1}{d !}$ of the computer storage space, compared 
with the na\"ive storage scheme.

As for the cumulants, one should primarily note that the number of elements of 
the inner sum
in Eq.~\eqref{eq::cum_r} in the second line equals the number of set partitions
of $\mathbf{k} = (1, \ldots, d)$ into exactly $\sigma$ parts, such that no part
is of size one, and can be represented as:
\begin{equation}
	\# \{[P_{\sigma}^{(2)}(\mathbf{k})]\} = S'(d, \sigma).
\end{equation}
We call it a modification of the Stirling number of the second kind $S(d, 
\sigma)$,
and compute it as follows
\begin{equation}{\label{eq::n2}}
	\begin{split}
S'(d,1) & = 1 \\
		S'(d, \sigma) & = \frac{d!}{\sigma!} \underbrace{\sum_{d_1 = 2}^{d - 
		2\sigma +
		2} \cdots \sum_{d_r = 2}^{d - 2\sigma + 2r - \sum_{i=1}^{r-1} d_i}
		\cdots}_{\sigma-1} \frac{1}{d_1 ! \cdots d_{r}! \cdots (d -
		\sum_{i=1}^{(\sigma - 1)} d_i)!},
	\end{split}
\end{equation}
where we count the number of ways to divide $d$ elements into subsets of length
$d_1 \geq 2, \ldots, d_r \geq 2, \ldots, d_{\sigma} \geq 2$ such that
$\sum_{r=1}^{\sigma} d_r = d$, and so $d_{\sigma} = d - \sum_{r=1}^{\sigma-1}
d_k$. Factor $\sigma!$ in the denominator counts the number of subset
permutations. Some examples of $S'(d,\sigma)$ are $S'(4,2) = 3$, $S'(5,2) = 10$,
$S'(6,2) = 25$ and $S'(6,3) = 15$.

The following sum
\begin{equation}
\sum_{\sigma = 1}^{\lfloor\frac{d}{2}\rfloor} S'(d, 
\sigma)= 1 + \sum_{\sigma = 2}^{\lfloor\frac{d}{2}\rfloor} S'(d, 
\sigma) = F(d),
\end{equation}
is the number of all partitions of a set of size $d$ into subsets, such that 
there is no subset of size one, and therefore
\begin{equation}{\label{eq::req}}
\begin{split}
F(1)& = 0, \\
F(d+1)& = B(d) - F(d).
\end{split}
\end{equation}
Here $B(d)$ is a Bell number \cite{comtet1974advanced}, the number of all 
partitions of a set of size
$d$ including subsets of size one and $B(d) - F(d)$ is the number of
partitions of a set of size $d$ into subsets such that at least one subset is
of size one. Relation Eq.~\eqref{eq::req} is derived from the fact that there 
is a bijective relation between partitions of $d$ element set into subsets 
such that at least one subset is of size one and partitions of $d+1$ element 
set 
into subsets such that there is no subset of size one.

To compute each element of the inner sum in Eq.~\eqref{eq::cum_r}, 
we need $\sigma - 1$ multiplications, and consequently to compute each element 
of the outer sum in Eq.~\eqref{eq::cum_r}, we need $(\sigma - 1)S'(d, \sigma)$ 
multiplications. Finally, the number of multiplications required to compute the 
second term of the RHS of Eq.~\eqref{eq::cum_r} is
\begin{multline}{\label{eq::n3x}}
N(d) = 
\sum_{\sigma = 2}^{\lfloor\frac{d}{2}\rfloor} (\sigma-1)S'(d, \sigma) \leq \\
\leq \left(\lfloor\frac{d}{2}\rfloor - 1\right)\sum_{\sigma = 
2}^{\lfloor\frac{d}{2}\rfloor} 
S'(d, \sigma)
 = \left(\lfloor\frac{d}{2}\rfloor -1 \right) (F(d)-1) =  U(d),
\end{multline}
for each tensor element. Let us note that $N(4) = 3$, $N(5) = 10$, $N(6) = 55$. 
The plot of $N(d)$ and the upper bound $U(d)$ are shown in Fig.~\ref{fig::f}. 
From 
Fig.~\ref{fig::f} the proposed upper bound produces a very good approximation of the 
number of multiplications.
\begin{figure}[t]
\centering
\includegraphics{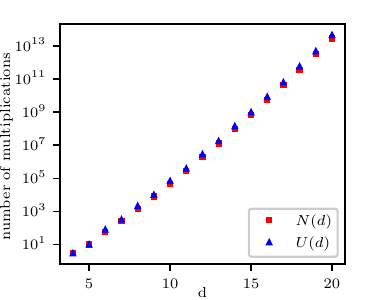}
\label{fig::f}
\caption{Plots of exact number of multiplications $N(d)$, and its upper bound 
based 
on Stirling numbers. Note 
the logarithmic scale on the $y$ axis.}
\end{figure}

From Eq.~\eqref{d::moment}, Eq.~\eqref{eq::cum_r} and Eq.~\eqref{eq::n3x} we can
conclude that to compute the $d$\textsuperscript{th} cumulant's element we need
$(d-1)t$ multiplications for the central moment and $N(d)$ multiplications for
the sums in Eq.~\eqref{eq::cum_r}. However, in order to calculate the cumulant
in an accurate manner, we need large data sets, \ie~for $d=4$ we use $t \sim
10^5$ and for $d > 4$ the data size must be even larger. Bearing in mind that
computation of cumulants of the order of $d > 10$ is inapplicable, the foregoing
gives $(d-1)t\gg N(d)$. Henceforth dominant computational power is required to
calculate the moment tensor, so there appears the need for approximately
$(d-1)t$ multiplications to compute each $d$\textsuperscript{th} cumulant's
element. To compute the whole $d$\textsuperscript{th} cumulant tensor we need
approximately $\frac{(d-1)t n^d}{d!}$ multiplications, while the factor $d!$ is
a result of taking advantage of super-symmetry. The added cost due to blocking
is negligible, see~\cite{schatz2014exploiting}.

It is now possible to compare the complexity of our algorithm with that of the 
na\"ive algorithm for chosen cumulants. For the $4$\textsuperscript{th} 
cumulant, 
the na\"ive algorithm 
would use Eq.~(\ref{eq::c4}) directly and would not take advantage of the 
super-symmetry of tensors. Therefore, it requires $9 t$ multiplications to 
compute a single cumulant tensor element and $9 t n^4$ multiplications to 
compute the whole 
cumulant tensor. Our algorithm, in this case, decreases the computational 
complexity by the factor of $3 \cdot 4! = 72$.

Analogically, the na\"ive formula for the $5$\textsuperscript{th} cumulant
\begin{equation}
	c_{i_1, \ldots, i_5}(\mathbf{X}) =  
	E\bigg(\tilde{X_{i_1}} \ldots \tilde{X_{i_5}}\bigg) 
	-\underbrace{E\bigg(\tilde{X_{i_1}}\tilde{X_{i_2}}\bigg)E\bigg(\tilde{X_{i_3}}
		\tilde{X_{i_4}}\tilde{X_{i_5}}\bigg)
		-\cdots}_{\times 10}
\end{equation}
requires approximately $34 t n^5$ multiplications to compute the whole cumulant
tensor. Our algorithm, in this case, decreases the computational complexity by
the factor of $\frac{34}{4} 5! = 900$. For higher $d$, the difference is even
greater due to the $d!$ factor caused by the application of the block structure
and the fact that the number of terms in na\"ive formulas grows with $d$ much
faster than $F(d)$ from Eq.~\eqref{eq::req}.

\subsection{Implementation performance}{\label{ssec::iperf}}
In this section, we analyse the performance analysis of our implementation. All 
computations were performed in the Prometheus computing cluster. This cluster 
provides shared user access with multiple user tasks running on each node. Each 
node is an HP XL730f Gen9 computing system with dual Intel Xeon E5-2680v3 
processors providing 12 physical cores and 24 computing cores with 
hyper-threading. The node has 128 GB of memory.

\subsubsection{The optimal size of blocks}{\label{ssec::bls}} 
The number of coefficients required to store a super-symmetric tensor of order 
$d$
and $n$ dimensions is equal to $\binom{d+n-1}{n}$. The storage of tensor
disregarding the super-symmetry requires $n^d$ coefficients. The block structure
introduced in~\cite{schatz2014exploiting} uses more than minimal amount of
memory but allows for easier further processing of super-symmetric tensors.

If we store the super-symmetric tensor in the block structure, the block size
parameter $b$ appears. In our implementation in order to store a super-symmetric
tensor in the block structure we need, assuming $n|b$, an array of
$(\frac{n}{b})^d$ pointers to blocks and an array of the same size of flags that
contain the information if a pointer points to a valid block. Recall that
diagonal blocks contain redundant information. Therefore on the one hand,
the smaller the value of $b$, the less redundant elements on diagonals of the
block structure. On the other hand, the larger the value of $b$, the smaller the
number of blocks, the smaller the blocks' operation overhead, and the fewer the
number of pointers pointing to empty blocks. For detailed discussion of memory usage
see~\cite{schatz2014exploiting}. The analysis of the influence of the parameter 
$b$ on the computational time of cumulants for some parameters are
presented in Fig.~\ref{fig::blocks}. We obtain the shortest computation time for
$b = 2$ in almost all test cases, and this value will be set as default and used
in all efficiency tests. Note that for $b=1$ we loose all the memory savings.

\begin{figure}[!h]
	\centering
	\subfigure[$4$\textsuperscript{th} cumulant tensor 
	\label{f::blockf4}]{\includegraphics{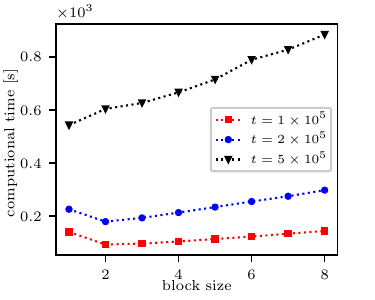}}
	\subfigure[$5$\textsuperscript{th} cumulant tensor
\label{f::blockf5}]{\includegraphics{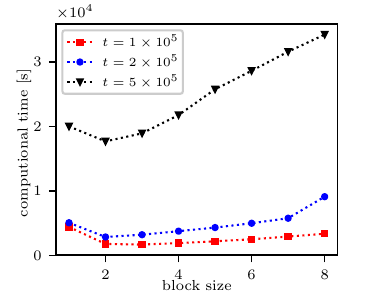}}
	\caption{Computation time for cumulant tensors computed using the block 
	structure and the proposed algorithm, for different block sizes $b$, at $n 
	= 60$.}
	\label{fig::blocks}
\end{figure}

\subsubsection{Comparison with na\"ive algorithms} The computational speedup of
cumulant calculation for the illustrative data is presented in Fig. 
\ref{fig::m4}.
The computational speedup is even higher than the theoretical value of $72$,
which is probably due to large operational memory requirements and some 
computational overhead while splitting data into terms of Eq.~\eqref{eq:c4} 
used by the na\"ive approach.

\begin{figure}[t]
	\centering
	\includegraphics{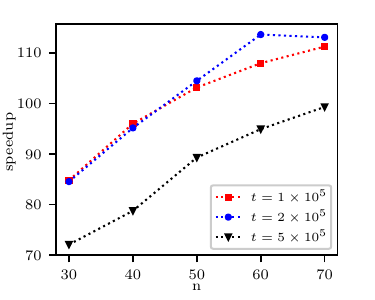}
	\caption{Computation time speed-up of $4$\textsuperscript{th} cumulant 
	tensor computed using the block structure and the proposed algorithm vs the 
	na\"ive algorithm.}
	\label{fig::m4}
\end{figure}

As for the moment calculation, let us recall from Section~\ref{ssec::storage} that we 
expect a speedup on the level of $d!$. As can be shown in 
Fig~\ref{fig::mspeedup}, this is the case for a high number of marginal 
variables $n$, as we approach speedup equal to $24$ for the fourth moment.
This is a case, since there is some redundancy in computation of diagonal 
blocks which decreases as $n$ rises, given $b$.
\begin{figure}[!h]
	\centering
	\includegraphics{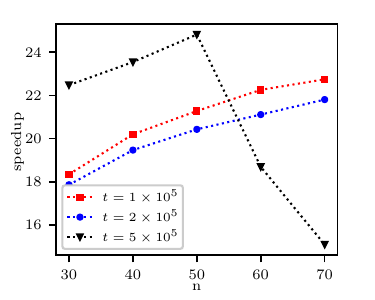}
	\caption{Computation time speedup for the $4$\textsuperscript{th} moment 
		tensor computed using the block structure vs. the na\"ive algorithm, 
		note a speedup up to $24$ times.}
	\label{fig::mspeedup}
\end{figure}

\subsubsection{Multiprocessing performance}

In this section we analyse the multiprocessing performance of moment tensor 
calculations, since according to Subsection~\ref{ssec::analysis}, the moment tensor 
calculation takes the majority of cumulants calculation time. 
Fig.~\ref{fig::nprocs} 
shows the speedup of multiprocess moment 
calculation compared to single process calculation. As can be shown in the 
figure, at first, we obtain linear scaling of the speedup with the number of 
processes. Next, we reach the saturation point. This is expected, as there are 
some parts of this calculation that cannot be done in parallel. Adding more 
processes leads to a drop in the speedup. This is due to the fact that adding 
more processes results in more overall overhead, yet we do not benefit from 
splitting the data further.
\begin{figure}[!h]
\centering

\subfigure[$4$\textsuperscript{th} moment tensor, $t=2\times 
10^5$]{\includegraphics{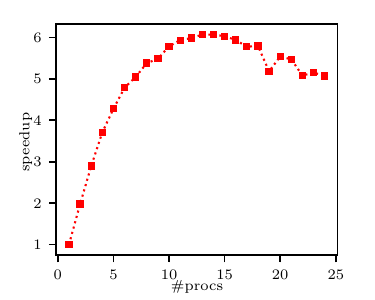}}
\subfigure[$5$\textsuperscript{th} moment tensor, $t=2\times 
10^5$]{\includegraphics{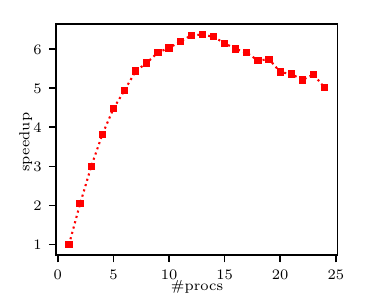}}

\caption{Computation time speedup for the $4$\textsuperscript{th} (left) and the
5\textsuperscript{th} (right) moment tensor due to multiprocessing for $n =
60$.} \label{fig::nprocs}
\end{figure}

\subsection{Comparison with the state of the art}{\label{ssec::soa}} The state of
the art in terms of the cumulant calculation simplification is referred to as 
umbral calculus \cite{rota1994classical}, which is a formal system consisting 
of certain
operations on objects called umbrae, mimicking addition and multiplication of
independent real-valued random variables. Using umbrae notation one can
determine symbolic formulas to calculate elements of cumulant tensors.
See~\cite{di2008unifying} where cumulants, also called $k$-statistics, were
derived using purely combinatorial operations. However, symbolic computations 
are less universal and sometimes problematic, while translating them into 
algorithms and code is not entirely straightforward.

We present a more general approach by implementing an algorithm that takes
multivariate data in the form of a matrix and computes its cumulant tensors. The
current state of the art is a package written in the \texttt{R} programming
language~\cite{de2012multivariate}. This algorithm uses the recursion relation 
to compute
the $d$\textsuperscript{th} cumulant from moments of the order of $1, \ldots, d$
\cite{mccullagh2009cumulants, mccullagh1987tensor, balakrishnan1998note}, see 
Eq.~\eqref{eq::cumfmom}.
\begin{equation}\label{eq::cumfmom}
\CC_{\mathbf{i}}(\mathbf{X}) = \sum_{\sigma = 1}^{d}  
\sum_{\zeta\in 
\{[P_{\sigma}(1 : d)]\}} \left(|\zeta|-1\right)!(-1)^{|\zeta|-1} 
\bigotimes_{\mathbf{k}_r 
\in \zeta} \MM_{(\mathbf{k}_r)}(\mathbf{X}),
\end{equation}
where $|\zeta|$ is the number of parts in the given partition $\zeta$. The
algorithm computes each element of cumulant tensors, without taking advantage of
their super-symmetry. For comparison, our formula, \ie~Eq.~\eqref{eq::cum_r} is
simpler, as it lacks factor $\left(|\zeta|-1\right)!(-1)^{|\zeta|-1}$ and the
inner sum has less elements, since we have introduced $P^{(2)}_{\sigma}$ instead
of $P_{\sigma}$. Further application of Eq.~\eqref{eq::cum_r} enables us to
compute the $d$\textsuperscript{th} cumulant tensor without determining the
$(d-1)$\textsuperscript{th} moment tensor. This fact can be advantageous in
high-resolution direction-finding methods for multi-source signals (the q-MUSIC
algorithm) \cite{chevalier2006high} where one needs a cumulant of the order of
$6$ but not a cumulant and a moment of the order of $5$. Furthermore, the major
benefit of our algorithm is the utilisation of the super-symmetry of cumulant
tensors. By introducing blocks, the computational complexity can be reduced by a
factor of $d!$ in the same manner as the storage requirement is reduced.

In \cite{de2012multivariate} two algorithms were implemented:
one---\texttt{four\_cumulants\_direct}---that uses a direct formula for
cumulants of orders $1$---$4$ which we call the specialized algorithm, and other
one---\texttt{cumulants\_upto\_p}---that can compute cumulants of arbitrary order
using Eq.~\eqref{eq::cumfmom}, which we call the general algorithm. Both
of these algorithms were implemented in the \texttt{R} programming language. The
specialized algorithm outperforms the general one in terms of speed. For
comparison's sake, we re-implemented the general algorithm from
\cite{de2012multivariate} in \texttt{Julia} maintaining high similarity between
both implementations.

To perform the efficiency comparison, we compare the computational time of our
algorithm with the aforementioned algorithms. The obtained results are
summarised in~Fig.~\ref{fig::comparison} which contains:
\begin{itemize}
\item The comparison of our algorithm and the general algorithm~\cite{de2012multivariate} 
re-im\-ple\-men\-ted in \texttt{Julia}---Fig.~\ref{fig::rvsjulia}. 
\item The comparison between our algorithm and the general algorithm implemented
in \cite{de2012multivariate}---Fig.~\ref{fig::rspecial}. Our algorithm is
faster by two orders of magnitude owing to the fact that there exists $d!$
acceleration factor. It results from the utilisation of super-symmetry through
application the block structure. It turns out that our implementation achieves
in practice even higher acceleration.
\item The comparison of our algorithm vs. the specialised algorithm implemented 
in~\cite{de2012multivariate}---Fig.~\ref{fig::rnaive}. 
\end{itemize}

\begin{figure}[t]
\centering
\subfigure[\texttt{Julia} implementation of general algorithm from~\cite{de2012multivariate}.]{\includegraphics{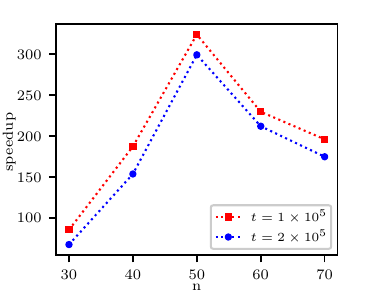}\label{fig::rvsjulia}}
\subfigure[\texttt{R} implementation of general algorithm from~\cite{de2012multivariate}.]{\includegraphics{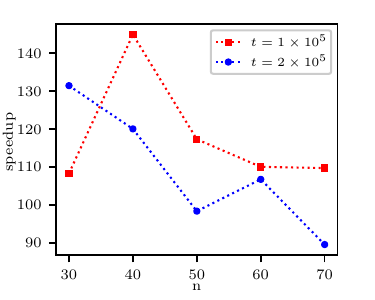}\label{fig::rspecial}}
\subfigure[Specialized algorithm from~\cite{de2012multivariate} implemented in \texttt{R}.]{\includegraphics{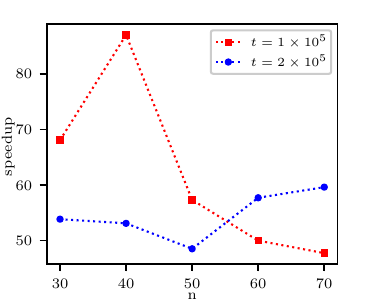}\label{fig::rnaive}}
\caption{Computation time speedup of $4$\textsuperscript{th} cumulant tensor calculation using algorithm employing the block structure vs. algorithms from~\cite{de2012multivariate}.}
\label{fig::comparison}
\end{figure}
%\begin{figure}[t]
%	\centering
%	\includegraphics{Rspecial}
%	\caption{Computation time speedup for the $4$\textsuperscript{th} cumulant 
%		tensor calculation using the block structure and the proposed algorithm 
%		vs. the direct algorithm implemented in \texttt{R} from 
%		\cite{de2012multivariate}.}
%	\label{fig::rnaive}
%\end{figure}
%%%%%%%%%%%%%%%%%%%%%%%%%%%%%%%%%%%%%%%%%%%%%%%%%%%%%%%%%%%%%%%%%

\section{Conclusions} This paper provides a discussion on both the method and 
the algorithm for calculation of arbitrary order moment and cumulant tensors 
given multidimensional data. To this end, we introduce the recurrence relation 
between
the $d$\textsuperscript{th} cumulant tensor and the $d$\textsuperscript{th} 
central moment
tensor as well as cumulant tensors of the order of $2, \ldots, d-2$. For 
purposes of  efficient computation and storage of super-symmetric tensors, we 
use blocks to store and
calculate only the pyramidal part of cumulant and moment tensors. Our algorithm
is significantly faster than the existing algorithms. The theoretical speedup is
given by the factor of $d!$, which makes the algorithm applicable in the 
analysis of large data sets. 
Another important aspect is that large data
sets are required to approximate accurately high order statistics on account of 
their large approximation error. If the estimation error challenge is 
successfully tackled, high order multidimensional statistics such as high order 
moments or cumulants will be an
important tool to analyse non-normally distributed data, where the mean vector
and the covariance matrix contain little information about the data. There are
many applications of such statistics, particularly involving signal analysis,
financial data analysis, hyper-spectral data analysis or particle physics.

\appendix
\section{The estimation error of high order statistics}\label{app::estimation} Let
$M_d$ be an estimator of the $d$\textsuperscript{th} moment of one-dimensional
centered random variable $V$, and let us have available $t$ realisations of $V$.
As we consider large $t$, the bias of such an estimator can be neglected as
being much smaller than a standard error. Hence, we can use the following
estimator
\begin{equation}
	M_d = \frac{1}{t} \sum_{l = 1}^t (V_l)^d,
\end{equation}
where we just sum $t$ independent random variables $V_l$ raised to the power of
$d$. The variance of $M_d$ can be represented as:
\begin{equation}
	\text{var}(M_d) = \frac{1}{t^2}\text{var}\left(\sum_{l =1}^t 
	\left(V_l\right)^d\right).
\end{equation}
Since $V_1, \ldots, V_t$ are independent and equal in distribution to $V$,
\begin{equation}
	\text{var}\left(\sum_{l =1}^t (V_l)^d\right) = \sum_{l =1}^t 
	\text{var}\left(V^d\right) = t \  \text{var}\left( 
	V^d\right),
\end{equation} 
hence
\begin{equation} 
	\text{var}(M_d) = \frac{1}{t}\left(M_{2d} - 
	(M_d)^2\right) < \frac{M_{2d}}{t}, \ \ \ \ \text{std}(M_d) 
	<\sqrt{\frac{M_{2d}}{t}},
\end{equation}
and obviously this limit is relevant if $M_{2d}$ exists. In the multivariate
case $V_1, \ldots, V_t$ are only independent in groups. The number of groups can
be estimated using the number of marginal variables $n$, but still $n \ll t$.
Consequently, a similar limitation can be expected, but replacing $M_{2d}$ with
a product of moments of lower orders.

\section{The recurrence formula for cumulant calculations}\label{app::recurrence} We
recall the cumulant generating function
\begin{equation}\label{eq:cum_gen}
K(\tau) = 
\log\left(\frac{\sum_{l=1}^t\exp\left(	\left[x_{l,1}, \ldots,  
x_{l,n} \right] \cdot\tau \right)}{t}\right),
\end{equation} 
which is related to the moment generation (characteristic) function
$\tilde{\phi}(\tau)$, $K(\tau) = \log(\tilde{\phi}(\tau)).$ For simplicity, we
use the following notation: $\partial_i = \frac{\partial}{\partial \tau_i}$,
$\partial_{\mathbf{i}} = \partial_{i_1, \ldots, i_d} =
\frac{\partial^d}{\partial \tau_{i_1}, \ldots, \partial \tau_{i_d}}$, and drop
$\mathbf{X}$ in notation $c(\mathbf{X}) \rightarrow c$. The elements of the
moment and cumulant tensor at multi-index $\mathbf{i}$ are
\begin{equation}\label{eq::moments}
	m_{\mathbf{i}}(\mathbf{X}) = \partial_{\mathbf{i}} \tilde{\phi}(\tau)
	\big|_{\tau = 0} \ \ \text{and} \ \ c_{\mathbf{i}}(\mathbf{X}) =
	\partial_{\mathbf{i}}  K(\tau) \big|_{\tau = 0}.
\end{equation}
We have the following theorem
\begin{proposition}
For each $\mathbf{i}$ the following holds:
	\begin{equation}\label{eq::proofT}
		\frac{\partial_{\mathbf{i}}\tilde{\phi}(\tau)}{\tilde{\phi}(\tau)} = 
		\sum_{\sigma = 1}^{|\mathbf{i}|} \sum_{\zeta\in \{[P_{\sigma}(1 : 
		d)]\}} \prod_{\mathbf{k}_r \in \zeta} 
		c_{\mathbf{i}_{\mathbf{k}_r}}(\tau).  
	\end{equation}
\end{proposition}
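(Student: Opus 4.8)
The statement is the multivariate form of the classical exponential (Fa\`a di Bruno) relation between moments and cumulants, written at a general argument $\tau$ rather than at $\tau=0$. Setting $c_{\mathbf{i}}(\tau) := \partial_{\mathbf{i}} K(\tau)$ and using $K = \log \tilde{\phi}$, hence $\tilde{\phi} = \exp(K)$, the claim is that $\partial_{\mathbf{i}} \tilde{\phi} / \tilde{\phi}$ expands as a sum over all set partitions of the index positions $1:m$, each block contributing the cumulant obtained by differentiating $K$ in the indices of that block. The plan is to prove this by induction on $m = |\mathbf{i}|$, reinterpreting the single extra derivative $\partial_{i_m}$ as the combinatorial operation that grows a partition of $1:(m-1)$ into a partition of $1:m$.

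For the base case $m=1$ one has $\partial_{i_1}\tilde{\phi} = \tilde{\phi}\,\partial_{i_1}K$, so the quotient equals $c_{i_1}(\tau)$, matching the unique partition of $\{1\}$. For the inductive step I would abbreviate the order-$(m-1)$ right-hand side as $R(\tau) = \sum_{\zeta \in \{[P_\sigma(1:m-1)]\}} \prod_{\mathbf{k}_r\in\zeta} c_{\mathbf{i}_{\mathbf{k}_r}}(\tau)$ and write the induction hypothesis as $\partial_{i_1,\ldots,i_{m-1}}\tilde{\phi} = \tilde{\phi}\,R$. Applying $\partial_{i_m}$, using the product rule together with $\partial_{i_m}\tilde{\phi} = \tilde{\phi}\,\partial_{i_m}K$, gives
\[
\frac{\partial_{\mathbf{i}}\tilde{\phi}}{\tilde{\phi}} = (\partial_{i_m}K)\,R + \partial_{i_m}R.
\]

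The heart of the argument is then to recognise these two terms as the two classes of partitions of $1:m$ sorted by the block that contains the new element $m$. In the first term $(\partial_{i_m}K)R = c_{i_m}(\tau)R$, appending the factor $c_{i_m}$ to each summand of $R$ adjoins the singleton $\{m\}$ to a partition of $1:(m-1)$, producing exactly those partitions of $1:m$ in which $m$ is a singleton. In the second term $\partial_{i_m}$ acts by the Leibniz rule on $\prod_{\mathbf{k}_r\in\zeta} c_{\mathbf{i}_{\mathbf{k}_r}}$, and since $\partial_{i_m} c_{\mathbf{i}_{\mathbf{k}_r}} = c_{\mathbf{i}_{\mathbf{k}_r \cup \{m\}}}$ (one more derivative of $K$ in position $i_m$), each resulting term inserts $m$ into exactly one existing block; summing over blocks and over $\zeta$ enumerates precisely the partitions of $1:m$ in which $m$ lies in a block of size at least two. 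Since every partition of $1:m$ falls into exactly one of these cases, the two terms together reconstruct $\sum_{\sigma}\sum_{\zeta\in\{[P_\sigma(1:m)]\}}\prod c$, closing the induction.

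The main obstacle is purely bookkeeping: I must verify that this grow-by-one operation is a genuine bijection at the level of the equivalence-class representatives $[P_\sigma(\cdot)]$ of Definition~\ref{d::class}, i.e. that each partition of $1:m$ is generated once and only once and that the singleton case correctly accounts for the increment of $\sigma$. Confirming that the derivative of a block factor matches the enlarged block, and that no partition is double-counted when $m$ is inserted into different blocks of the same $\zeta$, is where care is needed; everything else is routine differentiation. Finally, setting $\tau = 0$ and using $\tilde{\phi}(0)=1$ recovers Eq.~\eqref{eq::cum}.
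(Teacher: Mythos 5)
Your proposal is correct and follows essentially the same route as the paper's proof: induction on $|\mathbf{i}|$, using $\partial_{i_m}\tilde{\phi}=\tilde{\phi}\,c_{i_m}(\tau)$ and the Leibniz rule $\partial_{i_m}\prod_{\mathbf{k}_r\in\zeta}c_{\mathbf{i}_{\mathbf{k}_r}}=\sum_a c_{(\mathbf{i}_{\mathbf{k}_a},i_m)}\prod_{r\neq a}c_{\mathbf{i}_{\mathbf{k}_r}}$, then identifying the two resulting terms with the partitions of $1:m$ in which $m$ is a singleton versus those in which it joins an existing block. The only cosmetic difference is that you apply the product rule to $\tilde{\phi}\,R$ while the paper applies the quotient rule to $\partial_{\mathbf{i}}\tilde{\phi}/\tilde{\phi}$; these are algebraically identical, and your bookkeeping of the partition bijection is at the same level of detail as the paper's.
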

\begin{proof}
For $|\mathbf{i}| = 1$ the results follow from direct inspection. Next, for
$|\mathbf{i}| =2$ we get:
\begin{equation}\label{eq::proofm2}
 c_{i_1, i_2}(\tau) =
 \partial_{i_{1}}\left(\frac{\partial_{i_{2}}\tilde{\phi}(\tau)}{\tilde{\phi}(\tau)}\right)
  = \frac{ \partial_{i_{2} i_{1}} \tilde{\phi}(\tau)}{\tilde{\phi}(\tau)} - 
 c_{i_{2}}(\tau)c_{i_{1}}(\tau).
\end{equation} Now assume that 
Eq.~(\ref{eq::proofT}) holds for $|\mathbf{i}| = d$. Differentiating its LHS, we
have
\begin{equation} \partial_{i_{d+1}} 
\frac{\partial_{\mathbf{i}}\tilde{\phi}(\tau)}{\tilde{\phi}(\tau)} = 
\frac{\tilde{\phi}(\tau) \partial_{\mathbf{i}} 
\partial_{i_{d+1}}\tilde{\phi}(\tau)- 
\partial_{i_{d+1}}\tilde{\phi}(\tau)\partial_{\mathbf{i}}\tilde{\phi}(\tau)}{\tilde{\phi}^2(\tau)},
\end{equation}
further using Eq.~(\ref{eq::proofT}) we obtain
$\partial_{i_{d+1}}\tilde{\phi}(\tau) = \tilde{\phi}(\tau) c_{i_{d+1}}(\tau)$, 
therefore 
\begin{equation}{\label{eq::dlhs}}
 \partial_{i_{d+1}} 
\frac{\partial_{\mathbf{i}}\tilde{\phi}(\tau)}{\tilde{\phi}(\tau)} = \frac{ 
\partial_{\mathbf{i}'}\tilde{\phi}(\tau)}{\tilde{\phi}(\tau)} - 
c_{i_{d+1}}(\tau) \sum_{\sigma = 1}^{|\mathbf{i}|} \sum_{\zeta\in 
[P_{\sigma}(1 : d)]} \prod_{\mathbf{k}_r \in \zeta} 
c_{\mathbf{i}_{\mathbf{k}_r}}(\tau),
\end{equation}
where $\mathbf{i}' = (\mathbf{i}, i_{d+1})$. After differentiating 
Eq.~\eqref{eq::moments}, we have 
\begin{equation} 
\partial_{i_{d+1}}c_{\mathbf{i}}(\tau)  = c_{(\mathbf{i}, i_{d+1})}(\tau), 
\end{equation}
and analogously 
\begin{equation} 
\partial_{i_{d+1}}\prod_{\mathbf{k}_r \in \zeta} 
c_{\mathbf{i}_{\mathbf{k}_r}}(\tau) = \sum_{a=1}^{\sigma} 
c_{(\mathbf{i}_{\mathbf{k}_a}, i_{d+1})}(\tau)
\prod_{\mathbf{k}_r \in \zeta, r \neq a} c_{\mathbf{i}_{\mathbf{k}_r}} 
(\tau).
\end{equation}
Differentiating the RHS of Eq.~(\ref{eq::proofT}),
\begin{equation}{\label{eq::drhs}} 
\begin{split} 
\partial_{i_{d+1}} 
\sum_{\sigma = 1}^{|\mathbf{i}|} \sum_{\zeta\in \{[P_{\sigma}(1 : d)]\}} 
\prod_{\mathbf{k}_r \in \zeta} c_{\mathbf{i}_{\mathbf{k}_r}}(\tau) = 
\sum_{\sigma = 1}^{|\mathbf{i}|} \sum_{\zeta\in 
\{[P_{\sigma}(1 : d)]\}}\sum_{a=1}^{\sigma}c_{(\mathbf{i}_{\mathbf{k}_a}, 
i_{d+1})}(\tau) \prod_{\substack{\mathbf{k}_r 
\in \zeta\\ r \neq a}} c_{\mathbf{i}_{\mathbf{k}_r}} (\tau),
\end{split}
\end{equation}
comparing Eq.~\eqref{eq::drhs} with Eq.~\eqref{eq::dlhs}, we have 
\begin{equation} \begin{split} \frac{ 
\partial_{(\mathbf{i}, i_{d+1})}\tilde{\phi}(\tau)}{\tilde{\phi}(\tau)} &= 
c_{i_{d+1}}(\tau) \sum_{\sigma = 1}^{|\mathbf{i}|} \sum_{\zeta\in 
\{[P_{\sigma}(1 : d)]\}} \prod_{\mathbf{k}_r \in \zeta} 
c_{\mathbf{i}_{\mathbf{k}_r}}(\tau) \\ &+ \sum_{\sigma = 
1}^{|\mathbf{i}|} \sum_{\zeta\in \{[P_{\sigma}(1 : d)]\}} 
\sum_{a=1}^{\sigma}c_{(\mathbf{i}_{\mathbf{k}_a}, 
	i_{d+1})}(\tau)\prod_{\mathbf{k}_r \in \zeta, r \neq a} 
c_{\mathbf{i}_{\mathbf{k}_r}} (\tau). \end{split} 
\end{equation} 
Finally, we obtain
\begin{equation} \frac{ 
\partial_{\mathbf{i}'}\tilde{\phi}(\tau)}{\tilde{\phi}(\tau)} = \sum_{\sigma = 
1}^{|\mathbf{i}'|} \sum_{\zeta\in \{[P_{\sigma}(1 : (d+1))]\}} 
\prod_{\mathbf{k}_r \in \zeta} c_{\mathbf{i}_{\mathbf{k}_r}}(\tau).
\end{equation}
\end{proof}
If we observe that $\tilde{\phi}(\tau)\big|_{\tau = 0} = 1$ and $m_{\mathbf{i}}
= \partial_{\mathbf{i}} \tilde{\phi}(\tau) \big|_{\tau = 0}$ and
$c_{\mathbf{i}}(\tau) \big|_{\tau = 0} = c_{\mathbf{i}}$, then
Eq.~(\ref{eq::proofT}) at $\tau = 0$ will give Eq.~(\ref{eq::cum}).

\section*{Acknowledgements}
The authors would like to thank Adam Glos for revising the manuscript and 
Zbigniew Puchała for the discussion about error estimation and set partitions.
This research was supported in part by PL-Grid Infrastructure
\bibliographystyle{siamplain}
\bibliography{tensor_network}
\end{document}